\titleformat{\section}{\large\bfseries\filcenter}{\thesection}{1em}{}
\titleformat{\subsection}{\bfseries}{\thesubsection}{1em}{}
\patchcmd{\ttlh@hang}{\parindent\z@}{\parindent\z@\leavevmode}{}{}
\patchcmd{\ttlh@hang}{\noindent}{}{}{}
\newtheorem{thm}{Theorem}[section]
\newtheorem{cor}[thm]{Corollary}
\newtheorem{lemma}[thm]{Lemma}
\newtheorem{prop}[thm]{Proposition}
\theoremstyle{remark}
\theoremstyle{definition}
\newtheorem{rmk}[thm]{Remark}
\newtheorem{defn}[thm]{Definition}
\newtheorem{notation}[thm]{Notation}
\numberwithin{equation}{section}
\def\beq{\begin{equation}}
\def\eeq{\end{equation}}
\def\beqn{\begin{equation*}}
\def\eeqn{\end{equation*}}
\def\ben{\begin{enumerate}}
\def\een{\end{enumerate}}
\renewcommand\thanks[1]{%
  \begingroup
  \renewcommand\thefootnote{}\footnote{#1}%
  \addtocounter{footnote}{-1}%
  \endgroup
}
\renewcommand{\epsilon}{{\varepsilon}}
\renewcommand{\epsilon}{{\varepsilon}}
\def\aa{{\mathfrak a}}
\def\0{{(0,0)}}
\def\C{{\mathbb C}}
\def\N{{\mathbb N}}
\def\R{{\mathbb R}}
\def\Z{{\mathbb Z}}
\def\cf{\emph{cf.}~}
\def\ie{\emph{i.e.}~}
\def\cA{{\mathcal A}}
\def\cE{{\mathcal E}}
\def\cG{{\mathcal G}}
\def\cV{{\mathcal V}}
\def\sI{{\mathscr I}}
\def\bA{{\mathbf A}}
\def\bH{{\mathbf H}}
\def\bP{{\mathbf P}}
\def\bR{{\mathbf R}}
\def\fA{{\mathfrak A}}
\def\fa{{\mathfrak a}}
\def\a{\alpha}
\def\Sp{{\rm Sp \,}}
\def\ol{\overline}
\def\Id{{\rm Id\,}}
\def\Sl{{\rm Sl\,}}
\renewcommand{\;}{\,;\,}
\begin{document}

\begin{flushright}

\baselineskip=4pt

\end{flushright}

\begin{center}
\vspace{5mm}

{\Large\bf  INVARIANT STATES ON  NONCOMMUTATIVE TORI}

\thanks{F. B. is supported by the DFG SBF/CRC 1085 { ``Higher Invariants. Interactions between Arithmetic Geometry and Global Analysis" }.
S. M. is supported by the research grant { ``Geometric boundary value problems for the Dirac operator''} and he was partially supported within the DFG research training group GRK 1692 { ``Curvature, Cycles, and Cohomology''}. 
N.P. thanks the ITP of the University of Leipzig for the kind hospitality during the preparation of this work and DAAD for supporting this stay with the program ``Research Stays for Academics 2017".}

\vspace{5mm}

{\bf by}

\vspace{5mm}

 { \bf Federico Bambozzi}\\[1mm]
\noindent  {\it Fakult\"at f\"ur Mathematik, }{\it Universit\"at Regensburg, } {\it D-93040 Regensburg, Germany}\\[1mm]
email: \ {\tt federico.bambozzi@mathematik.uni-regensburg.de}
\\[6mm]
{  \bf Simone Murro}\\[1mm]
\noindent  {\it Mathematisches Institute, }{\it Universit\"at Freiburg, } {\it D-79104 Freiburg, Germany}\\[1mm]
email: \ {\tt simone.murro@math.uni-freiburg.de}
\\[6mm]
{  \bf  Nicola Pinamonti}\\[1mm]
\noindent   {\it Dipartimento di Matematica, }{\it Universit\`a di Genova, }
{\it I-16146 Genova, Italy}\\
{\it
INFN - Sezione di Genova, I-16146 Genova, Italy}
\\[1mm]
email: \ {\tt  pinamont@dima.unige.it}
\\[8mm]
\today
\\[10mm]
\end{center}

\begin{abstract}
For any number $h$ such that $\hbar:=h/2\pi$ is irrational and  { any skew-symmetric, non-degenerate bilinear form $\sigma:\Z^{2g}\times \Z^{2g} \to \Z$, let be $\cA^h_{g,\sigma}$ be the twisted group $*$-algebra $\C[\Z^{2g}]$ and consider the ergodic group of $*$-automorphisms of $\cA^h_{g,\sigma}$ induced by the action of the symplectic group $\Sp(\Z^{2g},\sigma)$. } We show that the only $\Sp(\Z^{2g},\sigma)$-invariant state on $\cA^h_{g,\sigma}$ is the trace state $\tau$. 
\end{abstract}

\paragraph*{Keywords:}  Twisted group $*$-algebra, invariant states, noncommutative tori
\paragraph*{MSC 2010: 46L30, 46L55, 58B34}. 
\\[0.5mm]

\renewcommand{\thefootnote}{\arabic{footnote}}
\setcounter{footnote}{0}

\section*{Introduction}

Let $\fA$ be a unital $C^*$-algebra, $\cG$ a compact group and $\Phi$ a strongly continuous representation of $\cG$ as an ergodic group of $*$-automorphisms of $\fA$, i.e. 
$\Phi_\Theta (\fa) = \fa$ for all $\Theta \in \cG$ implies $\fa=\lambda \Id$, for some scalar $\lambda$, where $\Id$ is the identity of $\fA$. It was shown in \cite{Stormer} that if
$\cG$ is Abelian and $\fA$ a von Neumann algebra then the unique $\cG$-invariant state on $\fA$ is a trace state. For several years, it has been an open problem if the same result holds with weaker assumptions, see e.g. \cite{OPK}. 
An important step forward was made in \cite{HLS},  where it was shown that if $\cG$ is a compact ergodic group of automorphisms acting on a unital $C^*$-algebra $\fA$, the unique $\cG$-invariant state is a trace. \\
In most of the models inspired by mathematical physics, the ergodic group of $*$-automorphisms is neither compact nor Abelian (see e.g. \cite{Bellissard, DMS} where $\cG$ is considered to be only locally compact) and, therefore, it would be desirable to classify all the $\cG$-invariant states.  {Indeed, from a mathematical perspective, they provide a ‘noncommutative generalization’ of the invariant measures in ergodic theory. Moreover, the representations of the $C^*$-algebra are implemented by a unitary representation of $\cG$ acting on a Hilbert space. Instead, from a physical perspective, they represent equilibrium states in statistical mechanics ~\cite{Araki,DKS,HHW,KR}.}

 In the present paper we consider any dynamical system of the form  {  $(\cA_{g,\sigma}^h,$ $\Sp(\Z^{2g},\sigma),\Phi)$, where $g\in\N$, $h$ is any number such that $\hbar=\frac{h}{2\pi}$ is irrational, $\sigma$ is any skew-symmetric, non-degenerate bilinear form on $\Z^{2g}$,  $\cA_{g,\sigma}^h$ are twisted group $*$-algebras for $\Z^{2g}$, $\Sp(\Z^{2g},\sigma)$ is the subgroup of Aut$(\Z^{2g})$ which preserve  $\sigma$ while $\Phi$ are representations as ergodic groups of the $*$-automorphisms of $\cA_g^h$.} We prove that the trace state $\tau$ defined in~\eqref{tracialstate} is the unique $\Sp(2g,\Z)$-invariant state. 
The key idea is to construct, for any positive state $\omega$ different from a trace state, a convex linear combination of restrictions of $\omega$ to suitable finite dimensional subspaces of $\cA_{g,\sigma}^h$ that results not positive.  The choice of the subspaces of  {$\cA_{g,\sigma}^h$} is made so that for any two subspaces $\cV_1,\cV_2 \subset$   {$\cA_{g,\sigma}^h$} there exists  a $*$-automorphism $\Phi$ such that $\Phi(\cV_1)=\cV_2$. \\
Let us remark that for $g=1$ the twisted group $*$-algebra $\cA^h_1,\sigma$ can be completed to the universal $C^*$-algebra known as noncommutative torus. 
In this setting, it has been shown that, for almost all deformation parameters, the trace state is the unique invariant state with respect to any fixed hyperbolic element of SL$(2\Z) $, see e.g. ~\cite{IS1, IS2} or ~\cite[Section 11.5]{IS3}.
  In this paper, we prove that the trace state is the unique $ \Sp (2g, \Z)$-invariant state for all irrational deformation parameters. Our proof uses purely algebraic methods and thus it can be used to study invariant states on more sophisticated twisted group $*$-algebras with symplectic forms valued in abelian groups (see e.g.~\cite{spinoff}).

The paper is structured as follows. In the first section, we recall the definition of  { twisted group $*$-algebra and we study the orbits of the action of $\Sp(\Z^{2g},\sigma)$} on $\Z^{2g}$. Section 2 is the core of the paper, where the main theorem is stated and proved. This is achieved using reduction steps. In the first step, we show that, given any finite dimensional subspace of  {$\cA_{g,\sigma}^h$}, it is possible to associate to any state $\omega$ a positive Hermitian matrix $\bH$ (see Notation \ref{notation}). Then, a convex linear combination of restrictions of $\omega$ on different subspaces of  {$\cA_{g,\sigma}^h$} is constructed in order to prescribe the value of $\omega$ on a given  {$\Sp(\Z^{2g},\sigma)$} orbit (see Proposition \ref{bR}). As already explained, the construction of these subspaces of  {$\Sp(\Z^{2g},\sigma)$} cannot be generic, but it should preserve the  {$\Sp(\Z^{2g},\sigma)$}-invariance of the convex combination of the restrictions of $\omega$ previously considered. Finally, it will be shown that the convex linear combination discussed above 
 is positive if and only if $\omega$ is the trace state, proving our main result.
This result will be achieved by showing that the matrices $\bH$ obtained restricting the state $\omega$ to suitable subspaces of  {$\cA_{g,\sigma}^h$} can be approximated by simpler ones (see Proposition \ref{prop:ergodic}).\\

\textit{Acknowledgements.} 
We would like to thank Ulrich Bunke, Nicol\'o Drago, Francesco Fidaleo, Giuseppe De Nittis, Alexander Schenkel and Stefan Waldmann for helpful discussions related to the topic of this paper. We are also grateful to Claudio Dappiaggi, Emilia Mu\~noz {and to the referees} for their useful comments on the manuscript.

\section{ {Algebraic noncommutative tori}}
Let $h$ be a number such that $\hbar = \frac{h}{2 \pi}$ is irrational and consider  {a skew-symmetric, non-degenerate bilinear form $\sigma: \Z^{2g}\times \Z^{2g}\to \Z$. }
Let now $C(\Z^{2g},\mathbb{C})$ be the set of complex valued functions over $\Z^{2g}$. 
Consider $W_m$ the linear operator labelled by an element $m\in\Z^{2g}$ which acts on 
${ v}\in C(\Z^{2g},\mathbb{C})$ in the following way
\[
(W_m v)(n) :=  e^{\i h {\sigma(m,n)}} v(n + m).
\]
The complex vector space $\cV(\Z^{2g},\sigma)$ generated by the elements of $\{W_m,m\in \Z^{2g}\}$ can be endowed with an involution defined by
\begin{equation}\label{involution}
W_m^*= W_{-m} \,, \qquad m \in \Z^{2g} 
\end{equation}
and with a product which acts on the generators as  
\begin{equation}\label{WeylRel}
W_n W_m = e^{\i h \sigma(n,m)} W_{n + m}\,,  \qquad  n,m \in \Z^{2g} \,. 
\end{equation}
\begin{rmk}
 { Notice that being $\sigma$ bilinear, skew-symmetric and non-degenerate,  $\Omega:= \exp(\imath h \sigma)$ defines a group $2$-cocycle. Indeed, for any $0,m,n,g\in\cG$ we have $\Omega(0,m)=\Omega(m,0)=0$ and 
\begin{align*}
\Omega(m, n) \times \Omega(m + n, g) &= \Omega(m, n )\times \big(\Omega(m, g) \times\Omega(n, g) \big)= \\
&= \big( \Omega(m, n )\times \Omega(m, g) \big) \times\Omega(n, g) = \Omega(m, n + g)\times \Omega(n, g)\,.
\end{align*}
The converse is also true: as shown in \cite[Theorem 7.1]{Kleppner}, for any 2-cocycle $\Omega$ there exists a skew-symmetric bilinear form $\sigma:\Z^{2g}\times \Z^{2g}\to \frac{\R}{\Z}$ which is cohomologous to $\Omega$. Whereas in this paper we will only consider $\Z$-valued bilinear form, we refer to \cite{spinoff} for a more general dissertation.
}
\end{rmk}
\noindent We finally notice that any element $\aa\in \cV(\Z^{2g},\sigma)$ can be written as a finite linear combination$$ 
\aa=\sum_m \alpha_m W_{m} \,, \qquad\qquad  m\in\Z^{2g} \,.
$$
We are now ready to summarize this short discussion in the following definition.

\begin{defn} \label{defn:weyl_algebra}
 { We call \emph{algebraic noncommutative torus} $\cA^h_{g,\sigma}$ the $*$-algebra obtained equipping $\cV(\Z^{2g},\sigma)$ with the involution \eqref{involution} and with the product \eqref{WeylRel}.}   
\end{defn}

\begin{rmk}
 {When completed by a canonical $C^*$-norm, the twisted group algebras $\cA_{g,\sigma}^h$ are also called Weyl $C^*$-algebras or exponential Weyl algebras in the literature, see e.g. \cite{Moretti,Robinson1,Robinson2}. These algebras should not be confused with quotients of the universal enveloping algebras of the Heisenberg Lie algebra, obtained by identifying the central elements of the Heisenberg Lie algebra with multiples of the identity element, which are also called Weyl algebras, see e.g. \cite{Dixmier1,Dixmier2}.}
\end{rmk}

\noindent Let us underline that 
the action of  {$\Sp(\Z^{2 g},\sigma)$} can be extended by linearity to an algebra automorphism of $\cA^h_{g,\sigma}$ as 
\begin{equation}\label{eq:SPaction} 
(\Phi_\Theta W)_m = W_{\Theta m}, \qquad \text{ for } m\in \Z^{2g } \text{ and }\Theta \in \Sp(\Z^{2g},\sigma). 
\end{equation}

\begin{lemma}\label{reduction}
 {Let $\sigma$ be a skew-symmetric, non-degenerate form on $\Z^{2g}$ and  let  $m\in\Z^{2g}$. Then there exists
$\delta_1,\dots,\delta_n \in \Z$ such that
$$ (\Z^{2g},\sigma)\simeq (\Z^2,\delta_1\sigma_2) \oplus \dots  \oplus (\Z^2,\delta_n\sigma_2) \,,$$
being $\sigma_2$  the canonical symplectic form on $\Z^2$ given by
$$\sigma_2=\begin{pmatrix}
0 & -1 \\ 1 & 0
\end{pmatrix}\,.$$}
\end{lemma}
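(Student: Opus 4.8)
The plan is to read Lemma \ref{reduction} as the normal-form theorem for non-degenerate alternating bilinear forms over the principal ideal domain $\Z$, and to prove it by induction on $g$, peeling off one rank-$2$ block at a time. The claimed isomorphism of pairs $(\Z^{2g},\sigma)\simeq\bigoplus_{i}(\Z^2,\delta_i\sigma_2)$ amounts to producing a $\Z$-basis $e_1,f_1,\dots,e_g,f_g$ of $\Z^{2g}$ such that $\sigma(e_i,f_i)=\delta_i$ while all remaining pairings among basis vectors vanish; the associated change of basis is then a $\Z$-linear isomorphism intertwining $\sigma$ with $\bigoplus_i\delta_i\sigma_2$, which is exactly the asserted decomposition.

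The inductive step rests on a minimal-pivot argument. Since $\sigma$ is $\Z$-valued and non-degenerate, the set of positive values attained by $\sigma$ is nonempty; let $\delta_1$ be its minimum, realized at a pair $e_1,f_1$ (after a sign change if necessary, so that $\sigma(e_1,f_1)=\delta_1>0$). I would first prove, by Euclidean division against this minimum, that $\delta_1$ divides $\sigma(e_1,z)$ and $\sigma(f_1,z)$ for every $z\in\Z^{2g}$: if $\sigma(e_1,z)=q\delta_1+r$ with $0<r<\delta_1$, then $\sigma(e_1,z-qf_1)=r$ contradicts the minimality of $\delta_1$, and symmetrically $\sigma(f_1,z+qe_1)=r$ handles $f_1$. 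Setting $V:=\Z e_1\oplus\Z f_1$ and $V^{\perp}:=\{z\in\Z^{2g}:\sigma(e_1,z)=\sigma(f_1,z)=0\}$, this divisibility lets me project any $z$ into $V^{\perp}$ via $z\mapsto z+\tfrac{\sigma(f_1,z)}{\delta_1}e_1-\tfrac{\sigma(e_1,z)}{\delta_1}f_1$ with integral coefficients, giving $\Z^{2g}=V\oplus V^{\perp}$ as a $\sigma$-orthogonal direct sum. Here $V\cong(\Z^2,\delta_1\sigma_2)$ through the basis $(e_1,f_1)$, and $V\cap V^{\perp}=0$ since $\sigma$ is non-degenerate on $V$.

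It then remains to check that $\sigma$ restricts to a non-degenerate form on $V^{\perp}$: any $z\in V^{\perp}$ pairing trivially with all of $V^{\perp}$ also pairs trivially with $V$, hence with all of $\Z^{2g}$, forcing $z=0$. As a direct summand of a free $\Z$-module, $V^{\perp}$ is itself free of rank $2g-2$, so the induction hypothesis applies and produces the remaining blocks $\delta_2\sigma_2,\dots,\delta_g\sigma_2$; the base case $g=1$ (where $V^{\perp}=0$) is immediate. I expect the only genuinely delicate point to be the divisibilities $\delta_1\mid\sigma(e_1,z)$ and $\delta_1\mid\sigma(f_1,z)$: over a field the complement would split automatically, but over $\Z$ it is precisely the minimality of the pivot $\delta_1$ that makes the projection coefficients integral and thereby guarantees that $V^{\perp}$ is a true $\Z$-summand rather than merely a rational one. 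If in addition one wanted the divisibility chain $\delta_1\mid\delta_2\mid\cdots\mid\delta_g$, a further standard minimality argument over all of $\Z^{2g}$ would supply it, but the statement as given does not require this refinement.
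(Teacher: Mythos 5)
Your proof is correct, but it takes a genuinely different route from the paper: the paper gives no argument at all, simply citing Theorem IV.1 of Newman's \emph{Integral Matrices}, which is the classical normal-form theorem for skew-symmetric matrices over a principal ideal domain. What you have written is, in effect, a self-contained proof of the non-degenerate, $\Z$-valued case of that cited theorem, and every step checks out: the minimal positive pivot $\delta_1=\sigma(e_1,f_1)$ exists by well-ordering; the Euclidean-division argument does give $\delta_1\mid\sigma(e_1,z)$ and $\delta_1\mid\sigma(f_1,z)$; these divisibilities make the projection $z\mapsto z+\tfrac{\sigma(f_1,z)}{\delta_1}e_1-\tfrac{\sigma(e_1,z)}{\delta_1}f_1$ integral, so that $\Z^{2g}=V\oplus V^{\perp}$ splits $\sigma$-orthogonally; non-degeneracy passes to $V^{\perp}$ exactly as you say; and $V^{\perp}$ is free of rank $2g-2$ because a finitely generated direct summand of a free module over a PID is free, so the induction closes. (The sign mismatch between $\sigma(e_1,f_1)=\delta_1$ and the stated matrix $\sigma_2$ is harmless, since the lemma allows the $\delta_i$ to have either sign.) The comparison is essentially one of economy versus self-containedness: the citation buys the paper brevity and more generality --- Newman's theorem handles arbitrary PIDs, degenerate forms, and produces the divisibility chain $\delta_1\mid\delta_2\mid\cdots$, which you correctly observe is not needed for the statement --- while your argument makes the lemma independent of the reference by an elementary induction, which is the standard proof underlying the cited theorem anyway.
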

\begin{proof}
 {This lemma follows easily from  \cite[Theorem IV.1]{Morris}.}
\end{proof}

 {On account on Lemma~\ref{reduction}, we can  focus on the algebraic noncommutative torus $\cA^h:=\cA^h_{g=2,\sigma_2}$ without loss of generality.}

\begin{notation}
	We remark that even if the isomorphism class of the algebra $\cA^h_{g,\sigma}$ depends on the choice of the number $h$, all our results are independent of this choice, provided $\hbar$ is kept irrational. Therefore, we omit to refer to it in the notation. 
\end{notation}


In the next proposition we establish a one-to-one correspondence between the orbits of the symplectic group $\Sp(2,\Z)$ and the set of elements
$$  
\cE :=\{ (0,j)\, |\, j \in \N \}\,.
$$

\begin{prop} \label{prop:orbit}
	Let $n_1,n_2\in\Z^{2}$ be of the form $n_i= (0,m_i)$, with $m_i\in \Z$. If $m_i \geq 0$ and $m_1 \neq m_2$, then $n_1$ and $n_2$ are elements of different orbits. 
	Furthermore, for any element  $n \in \Z^{2}$, there exists a $\Theta\in \Sp(2,\Z)$ such that 
	$\Theta n=(0,m)$ for some $m\in \mathbb{N}$. 
\end{prop}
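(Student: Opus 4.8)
The plan is to prove the two assertions separately, both relying on the structure of $\Sp(2,\Z) \cong \Sl(2,\Z)$ acting on $\Z^2$ in the standard way.

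For the second assertion, the key observation is that $\Theta n = n'$ requires $\Theta$ to send the column vector $n$ to $n'$, and since $\Sp(2,\Z)$ consists of integer matrices of determinant $1$, the action preserves the greatest common divisor of the entries. Given $n = (a,b) \in \Z^2$, I would first set $d := \gcd(a,b)$ (with the convention $\gcd(0,0)=0$). The classical fact is that for any pair $(a,b)$ with $\gcd(a,b) = d$, there exist integers $x,y$ with $ay - bx = d$, hence the matrix $\bigl(\begin{smallmatrix} x & y \\ a/d & b/d \end{smallmatrix}\bigr)$ — after suitable normalization — lies in $\Sl(2,\Z)$ and carries $(a,b)$ to $(0,d)$ or $(d,0)$. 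I would use column operations (equivalently, right multiplication by elementary matrices in $\Sl(2,\Z)$) to reduce $n$ to the form $(0,m)$ with $m = d \geq 0$, which is exactly the Euclidean-algorithm argument phrased in terms of the group action. Taking $m \in \N$ is automatic once we allow the sign-flip $-\Id \in \Sp(2,\Z)$.

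For the first assertion, I would show that the nonnegative integer $m$ in $(0,m)$ is a complete invariant of the orbit, so that distinct nonnegative $m_1 \neq m_2$ force $n_1,n_2$ into distinct orbits. The cleanest route is again the $\gcd$ invariant: since every $\Theta \in \Sp(2,\Z)$ has $\det \Theta = 1$ and integer entries, the quantity $\gcd$ of the coordinates is preserved under the action, because $\Theta$ and $\Theta^{-1}$ are both integral so they induce mutually inverse maps on the lattice, neither of which can decrease the gcd. For $n_i = (0,m_i)$ with $m_i \geq 0$ we have $\gcd(0,m_i) = m_i$, so if $n_1$ and $n_2$ were in the same orbit we would get $m_1 = m_2$, contradicting the hypothesis.

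The main obstacle, such as it is, is purely bookkeeping: making precise that the gcd is genuinely invariant (and not merely non-decreasing) under the $\Sp(2,\Z)$-action, and handling the degenerate case $n = 0$ separately so that $(0,0)$ is its own orbit with invariant $m = 0$. I expect the Euclidean reduction in the second part to require care in exhibiting the reducing matrix explicitly as an element of $\Sp(2,\Z) = \Sl(2,\Z)$, but this is standard and I would simply invoke the Smith normal form / Hermite normal form over $\Z$ for the $1 \times 2$ case to produce the required $\Theta$.
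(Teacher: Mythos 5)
Your proposal is correct, and the two halves relate to the paper differently. For the second assertion your argument is essentially the paper's: the paper divides out $g=\gcd(n_1,n_2)$ to get a coprime pair $(e_1,e_2)$, takes the annihilating row $(e_2,-e_1)$, and completes it to a determinant-one matrix using the invertibility of $e_2$ modulo $e_1$ — which is exactly Bézout, i.e.\ your Euclidean/Hermite reduction phrased without elementary matrices. (One slip on your side: the displayed matrix $\bigl(\begin{smallmatrix} x & y \\ a/d & b/d \end{smallmatrix}\bigr)$ does not do the job — it sends $(a,b)$ to $(xa+yb,\,(a^2+b^2)/d)$ and has determinant $-1$; the correct completion puts the Bézout coefficients \emph{below} the annihilating row, e.g.\ $\bigl(\begin{smallmatrix} b/d & -a/d \\ c & e \end{smallmatrix}\bigr)$ with $c(a/d)+e(b/d)=1$. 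Since you hedge with ``after suitable normalization'' and the elementary-matrix route is sound, this is a repairable slip, not a gap.) For the first assertion your route is genuinely different: the paper argues by brute force that any $\Theta$ with $\Theta(0,m_1)=(0,m_2)$ must have $b=0$, hence $a=d=\pm 1$, and then nonnegativity of the $m_i$ forces $m_1=m_2$; you instead note that $\gcd$ is a complete orbit invariant, since $\gcd(v)\mid\gcd(\Theta v)$ for any integral $\Theta$ and symmetrically for $\Theta^{-1}$, so $\gcd(0,m_i)=m_i$ separates the orbits. Your version buys more than the paper's: combined with the second assertion it identifies the $\Sp(2,\Z)$-orbits on $\Z^2$ exactly as the fibers of $\gcd:\Z^2\to\N$, it makes the normalization $m\in\N$ automatic (a gcd is nonnegative, whereas the paper never addresses the sign of $m$ in its second part), and it avoids the paper's slightly loose conclusion that $\Theta$ ``is the identity'' (false as stated — any $\bigl(\begin{smallmatrix} 1 & 0 \\ c & 1 \end{smallmatrix}\bigr)$ fixes $(0,m_1)$ — though only $\Theta n_1=n_1$ is actually needed). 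What the paper's computation buys in exchange is that it is entirely self-contained: no gcd conventions for $(0,0)$ and no invariance lemma are required.
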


\begin{proof}
	Since $m_1\neq m_2$, we can set $m_1\neq 0$. Assume that there exists $\Theta\in \Sp(2,\Z)$ such that $\Theta n_1=n_2$. Since a generic $\Theta$ takes the form 
	\[
	\Theta = \begin{pmatrix}  a & b \\ c & d  \end{pmatrix}, \qquad a,b,c,d \in \mathbb{Z}, 
	\]
	$\Theta n_1=n_2$ implies $b=0$. Furthermore, since $\Sp(2,\Z)=\Sl(2,\Z)$, then $\det\Theta=1$. As a consequence,  $a=d$ and $a=\pm 1$. Hence, since both $m_1$ and $m_2$ are positive, $\Theta n_1 = n_2$  implies that $\Theta$ is the identity. This contradicts the fact that $m_1\neq m_2$.
	
	Consider now the action of $\Theta \in \Sp(2,\mathbb{Z})$ on a generic element $n=(n_1,n_2)\in\Z^2$, namely  $\Theta v = (an_1+bn_2,cn_1+dn_2)$. We are going to demonstrate that it is possible to select $a,b,c,d$ such that $an_1+bn_2=0$ and $ad-bc=1$. In the case in which $n_1$ or $n_2$ is $0$, it is enough to set $b=0,$ $a=d=\pm 1$ or $a=0,$ $b=-c=\pm1$ respectively. If $n_1=n_2\neq 0$, we just choose $a=1,b=-1,d=1,c=0$. It remains to deal with the case in which $n_1\neq n_2$ and both are non-zero. To this end, we denote $g=\textnormal{g.c.d.}(n_1,n_2)$, the greatest common divisor of $n_1$ and $n_2$. Thus $n_i=e_i g$ for a couple of integers $e_i\in\mathbb{Z}$ which are thus coprime.
	We can choose,   
	$a=e_2$, $b=-e_1$, in order for the first equation $an_1+bn_2=0$ to be satisfied. It remains to show that there exists a choice of $c,d\in \mathbb{Z}$ such that
	\[
	ac-bd = 1  \Longrightarrow   e_2 c + d e_1  = 1.
 	\]
	Suppose that $e_2 < e_1$ and consider $\Z_{e_1}:=\mathbb{Z}\slash (e_1 \mathbb{Z})$. Since $e_1$ and $e_2$ are coprime, then
	$e_2 \in (\Z_{e_1})^\times$ which is the multiplicative group (modulo $e_1$) formed by the subset of elements of $\mathbb{Z}\slash (e_1 \mathbb{Z})$ coprime to $e_2$. 
	Hence, $e_2$ has an inverse in $\mathbb{Z}_{e_1}$ and, therefore, it is possible to find $c$ and $d$ such that 	$e_2 c + d e_1  = 1$.
	Since the case $e_1<e_2$ can be obtained interchanging the role of $n_1$ and $n_2$, this concludes our proof.
\end{proof}

\begin{cor} \label{cor:orbit}
	Every $\Sp(2,\Z)$-orbit of $\Z^2$ contains an element of the form $(j, j)$ with $j \in \N$.
\end{cor}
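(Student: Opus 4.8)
The plan is to deduce this statement directly from Proposition~\ref{prop:orbit}. By the second assertion of that proposition, every $\Sp(2,\Z)$-orbit of $\Z^2$ already contains a representative of the form $(0,m)$ with $m\in\N$; so it suffices to move such a representative to one of the shape $(j,j)$ by a single further symplectic transformation.

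To this end I would exhibit an explicit $\Theta\in\Sp(2,\Z)$ carrying $(0,m)$ to $(m,m)$. Writing a generic element of $\Sp(2,\Z)=\Sl(2,\Z)$ as $\Theta=\begin{pmatrix} a & b\\ c & d\end{pmatrix}$ and recalling from the proof of Proposition~\ref{prop:orbit} that it acts by $\Theta(n_1,n_2)=(an_1+bn_2,\,cn_1+dn_2)$, the image of $(0,m)$ is $(bm,dm)$. Choosing $b=d=1$ forces the two coordinates to coincide, and any completion with $ad-bc=1$ — for instance $a=1$, $c=0$ — yields a legitimate element of $\Sl(2,\Z)$. Thus $\Theta=\begin{pmatrix} 1 & 1\\ 0 & 1\end{pmatrix}$ sends $(0,m)$ to $(m,m)$, which is of the required form with $j=m$.

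There is essentially no obstacle here: the corollary is a one-line consequence of the normal-form result of Proposition~\ref{prop:orbit}. The only point worth recording is that $j=m$ automatically lies in $\N$, since that proposition already delivers $m\ge 0$; the degenerate case $m=0$ simply gives the representative $(0,0)$, which is again of the desired shape with $j=0$.
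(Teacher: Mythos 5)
Your proof is correct and follows essentially the same route as the paper: invoke Proposition~\ref{prop:orbit} to obtain the representative $(0,m)$ in each orbit, then apply the shear $\begin{pmatrix} 1 & 1 \\ 0 & 1 \end{pmatrix}$ to send it to $(m,m)$. The paper's proof uses exactly this matrix, so there is nothing to add.
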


\begin{proof}
	Consider the element $(0, j)$ that by Proposition \ref{prop:orbit} can be found in any $\Sp(2,\Z)$-orbit of $\Z^2$. Then the element
	\[ \begin{pmatrix}  1 & 1 \\ 0 & 1  \end{pmatrix} \cdot \begin{pmatrix}  0 \\ j  \end{pmatrix} = \begin{pmatrix}  j \\ j  \end{pmatrix} \]
	belongs to the same orbit.
\end{proof}

We conclude this section with the following remark. 
\begin{rmk}
Since the set of points which are fixed under the action of $\Sp(2,\Z)$ on $\Z^2$ is $\{(0,0)\in \Z^2\}$, the action of $\Phi_\Theta$, with $\Theta\in \Sp(2,\Z)$, is \textit{ergodic} on $\cA$, i.e. the vectors $\lambda W_{(0,0)}$, with $\lambda \in \C$, are the only invariant elements in $\cA$. Notice that $W_\0$ is indeed the identity of the  {algebraic noncommutative torus}.
\end{rmk}

\section{Invariant states}\label{InvariantState}

Let now $\omega$ be a state, namely a linear, continuous functional  from $\cA $ into $\C$ that is positive (i.e. $\omega(\aa^*\aa)\geq 0$ for any $\aa\in\cA $) and normalized (i.e. $\omega(W_{(0,0)})=1 \,).$ 

\begin{defn} \label{defn:invariant_state}
We call a state $\omega$ on $\cA $ \emph{$\Sp(2,\Z)$-invariant} if for any $*$-automorphism $\Phi_\Theta$, with $\Theta\in \Sp(2,\Z)$, it holds
	\[  \omega \circ \Phi_\Theta =  \omega \,.   \] 
\end{defn}
In order to construct a state $\omega$ on $\cA $ it is enough to prescribe its values on the generators $W_m$, $m\in \Z^2$,
\begin{equation}\label{eq:state on W}
\omega(W_m)=\begin{cases} 1 & \text{ if } m=\0 \\ p^{(m)} \in \C & \text{ else} \end{cases}
\end{equation}
for a sequence of values $p^{(m)}$
and then extend it by linearity to any element $\aa\in\cA $. 
We shall see below in Remark \ref{rmk:p<=1} that the positivity of $\omega$ implies    
\[ 
\sup_{m \in \Z^2} |p^{(m)}| \leq 1. 
\]
The theorem below, which is the main result of this paper, shows that the only $\Sp(2,\Z)$-invariant state is the trace state.

\begin{thm} \label{thm:main}
Let $\cA $ be an  {algebraic noncommutative torus}. Then the only $\Sp(2,\Z)$-invariant state is the state defined for every $m\in\Z^2$ as
	\begin{equation}\label{tracialstate}
\tau(W_m)=\begin{cases} 1 & \text{ if } m=(0,0) \\ 0 & \text{ else .} \end{cases}
\end{equation}
\end{thm}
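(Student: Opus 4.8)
The plan is to turn $\Sp(2,\Z)$-invariance into a rigidity statement about the numbers $p^{(m)}=\omega(W_m)$, and then to break positivity by forming a convex combination of restrictions of $\omega$ whose off-diagonal phases are annihilated by the irrationality of $\hbar$. First I would record what invariance and positivity give for free. By \eqref{eq:SPaction}, invariance means $p^{(\Theta m)}=p^{(m)}$ for all $\Theta$, so $p^{(m)}$ depends only on the $\Sp(2,\Z)$-orbit of $m$; by Proposition \ref{prop:orbit} the nonzero orbits are labelled by $j\in\N$ via the representative $(0,j)$, whence $\omega$ is encoded by a single sequence $p_j$, $j\ge0$, with $p_0=1$. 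Since $-m$ lies in the same orbit as $m$ while $\omega(W_{-m})=\omega(W_m^*)=\overline{p^{(m)}}$, each $p_j$ is real, and Remark \ref{rmk:p<=1} gives $|p_j|\le1$. The state $\tau$ corresponds to $p_0=1$, $p_j=0$ for $j\ge1$, so the whole task reduces to proving $p_j=0$ for every $j\ge1$.

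Next I would exploit positivity through the Hermitian matrices $\bH$ of Notation \ref{notation}: for a finite $S\subset\Z^2$ and $\aa=\sum_{m\in S}\alpha_m W_m$, the inequality $\omega(\aa^*\aa)\ge0$ says exactly that the matrix with entries $\bH_{m,n}=e^{-\i h\sigma(m,n)}p^{(n-m)}$, $m,n\in S$, is positive semidefinite. Fix $j\ge1$ and $k\in\N$. For each prime $x>k$ I would take the configuration $S_x=\{\0\}\cup\{m_i(x):1\le i\le k\}$ with $m_i(x)=(jx,ji)$. Each $m_i(x)$ has $\gcd=j$, hence lies in the orbit labelled $j$, while $m_i(x)-m_l(x)=(0,j(i-l))$ lies in the \emph{fixed} orbit labelled $j|i-l|$ and $\sigma(m_i(x),m_l(x))=j^2(i-l)x$. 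Writing $q_{il}:=p^{(0,\,j(i-l))}$ (a constant independent of $x$), the associated positive semidefinite matrix $\bH^{(x)}$ then has all diagonal entries equal to $1$, anchor entries $\bH^{(x)}_{\0,i}=\bH^{(x)}_{i,\0}=p_j$ constant in $x$ (this is precisely where invariance is used), and off-diagonal entries $\bH^{(x)}_{i,l}=q_{il}\,e^{-\i h j^2(i-l)x}$.

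Finally I would average. A convex combination of positive semidefinite matrices is positive semidefinite, so the means $N^{-1}\sum_x\bH^{(x)}$ over the primes $x\in[2,X]$ are positive semidefinite, as is their limit. Diagonal and anchor entries are unaffected, whereas each off-diagonal entry tends to $q_{il}\cdot\lim_X N^{-1}\sum_x e^{-\i h j^2(i-l)x}=0$, because $\hbar j^2(i-l)$ is irrational and $(\hbar j^2(i-l)x)_x$ equidistributes modulo $1$ along the primes. The limiting positive semidefinite matrix is therefore $\bar\bH=\begin{pmatrix}1 & p_j\mathbf 1^{\mathrm T}\\ p_j\mathbf 1 & I_k\end{pmatrix}$, whose eigenvalues are $1$ (with multiplicity $k-1$) and $1\pm|p_j|\sqrt{k}$. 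Positivity forces $1-|p_j|\sqrt{k}\ge0$, i.e. $|p_j|\le k^{-1/2}$ for every $k$, hence $p_j=0$; since $j\ge1$ was arbitrary, $\omega=\tau$. This identifies exactly the mechanism behind Proposition \ref{prop:ergodic}: the matrices $\bH$ are approximated, after averaging, by the simple matrix $\bar\bH$.

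I expect the genuine difficulty to lie in the bookkeeping of the last two steps rather than in any isolated estimate: one must choose the vectors $m_i(x)$ so that they simultaneously remain in the prescribed orbit (so that invariance pins every anchor entry to the \emph{same} constant $p_j$) and have pairwise symplectic pairings that are nonzero integer multiples of the free parameter $x$ (so that irrationality of $\hbar$ forces the off-diagonal phases to equidistribute and cancel in the mean), all while each averaged matrix remains a legitimate restriction-of-$\omega$ positivity constraint. If one prefers to avoid Vinogradov's theorem, the equidistribution along primes can be replaced by Weyl equidistribution of $x$ along an arithmetic progression with common difference $\operatorname{lcm}(1,\dots,k)$ chosen coprime to the indices, which also keeps each $m_i(x)$ in its orbit.
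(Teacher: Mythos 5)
Your proof is correct, and it shares the paper's overall strategy -- restrict $\omega$ to the span of $W_\0$ plus $k$ Weyl generators lying in a \emph{single} $\Sp(2,\Z)$-orbit (so that invariance pins all anchor entries to the same real number $p_j$), then average the resulting positive semidefinite Gram matrices so that the off-diagonal entries cancel, and finally observe that the resulting ``arrow'' matrix $[\,p_j\;1\;0\;\dots\;0\,]$ has determinant $1-kp_j^2$, forcing $|p_j|\le k^{-1/2}$ for all $k$ -- but the mechanism you use for the cancellation is genuinely different. The paper fixes the size $d$, uses irrationality of $\hbar$ to find one integer $\mathtt{N}$ with $h\mathtt{N}\xi_2^2\approx 2\pi/d \pmod{2\pi}$ (Proposition \ref{prop:ergodic}), builds $d$ subspaces $\cV_{d;l}$ from the explicit symplectic matrices \eqref{Theta form} whose Gram-matrix phases are then approximately powers of a primitive $d$-th root of unity, and cancels them \emph{exactly} by the finite average of Proposition \ref{bR}; the price is the $\epsilon$-bookkeeping ($\mathbf{1}_\epsilon$, $\mathbf{i}_\epsilon$, the estimate $R(d)\le 2d(d-1)d!$). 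You instead take explicit configurations $(jx,ji)$, $1\le i\le k$, whose orbit membership you certify by the gcd description of $\Sl(2,\Z)$-orbits (implicit in the proof of Proposition \ref{prop:orbit}), so that the anchor entries and the off-diagonal moduli $q_{il}=p_{j|i-l|}$ are exactly constant in the parameter $x$ while the phases $e^{-\i h j^2(i-l)x}$ carry all the $x$-dependence; you then kill the phases only \emph{in the limit}, by Ces\`aro averaging over $x$ and invoking equidistribution, with closedness of the cone of positive semidefinite matrices replacing the paper's explicit error estimates. Two remarks on what each route buys. First, your prime-indexed version rests on Vinogradov's theorem, which is far heavier machinery than anything in the paper; but your own fallback -- taking $x$ in the progression $x=1+nL$ with $L=\mathrm{lcm}(1,\dots,k)$, which keeps $\gcd(x,i)=1$ for all $i\le k$ -- needs only the elementary fact that $\frac1N\sum_{n\le N}e^{\i\theta n}\to 0$ whenever $e^{\i\theta}\ne 1$ (a geometric series bound), since $\hbar j^2(i-l)L$ is irrational; with that substitution your argument is arguably cleaner than the paper's, as every entry of the averaged matrices is either exactly constant or exactly tending to zero and no $\epsilon$-control is needed. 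Second, your eigenvalue computation for the arrow matrix ($1$ with multiplicity $k-1$ and $1\pm|p_j|\sqrt{k}$) is an equivalent replacement for the paper's inductive determinant formula of Lemma \ref{lem:Pn_negative}. One presentational caution: the identification of orbits with gcd values, which your configurations rely on, is not stated as such in the paper -- Proposition \ref{prop:orbit} only exhibits representatives $(0,m)$ -- so in a self-contained write-up you should extract from its proof the statement that a nonzero vector lies in the orbit of $(0,g)$ where $g$ is the gcd of its coordinates.
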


The rest of this section is devoted to prove Theorem \ref{thm:main}. Given a state $\omega$ written as \eqref{eq:state on W}, our first observation is the following.

\begin{prop}\label{prop:real}
Let $\cA $ be an  {algebraic noncommutative torus} and consider a $\Sp(2,\Z)$-invariant state $\omega$. Then, for any $m\in \Z^2$, it holds
$$\omega(W_m) \in \R \,.$$
\end{prop}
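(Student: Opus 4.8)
The plan is to combine two elementary constraints on the numbers $\omega(W_m)$: one imposed by positivity of the state and one imposed by $\Sp(2,\Z)$-invariance.

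First I would use positivity alone. Any positive linear functional on a unital $*$-algebra is Hermitian, i.e. $\omega(\aa^*)=\overline{\omega(\aa)}$ for every $\aa\in\cA$: this follows from the positive semidefiniteness of the sesquilinear form $(\aa,\mathfrak{b})\mapsto \omega(\aa^*\mathfrak{b})$, which forces it to be conjugate-symmetric, upon specializing $\mathfrak{b}=W_\0$ (recall $W_\0$ is the identity of $\cA$). Applying this to the generators and using the involution $W_m^*=W_{-m}$ from \eqref{involution}, I obtain
\[
\omega(W_{-m})=\overline{\omega(W_m)}, \qquad m\in\Z^2 .
\]

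Second, I would feed in the invariance. The crucial observation is that $\Sp(2,\Z)=\Sl(2,\Z)$ contains $-\Id$: indeed $\det(-\Id)=1$ and $-\Id$ preserves $\sigma$ since $\sigma$ is bilinear. By \eqref{eq:SPaction} the induced automorphism acts as $\Phi_{-\Id}W_m=W_{-m}$, so invariance $\omega\circ\Phi_{-\Id}=\omega$ gives
\[
\omega(W_{-m})=\omega(W_m), \qquad m\in\Z^2 .
\]
Comparing the two displayed identities yields $\omega(W_m)=\overline{\omega(W_m)}$, hence $\omega(W_m)\in\R$; the case $m=\0$ is immediate since $\omega(W_\0)=1$.

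The entire content of the argument is the recognition that $-\Id$ is the symplectic element linking each generator $W_m$ to its adjoint $W_{-m}$; once this is identified, the reality constraint from the $*$-structure and the symmetry constraint from invariance coincide and force $\omega(W_m)$ to be real. I therefore expect no genuine technical obstacle here, the only point requiring (trivial) verification being that $-\Id$ lies in $\Sp(2,\Z)$.
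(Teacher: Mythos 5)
Your proposal is correct and follows essentially the same route as the paper: both deduce $\overline{\omega(W_m)}=\omega(W_m^*)=\omega(W_{-m})$ from positivity (the paper via expanding $\omega\bigl((W_m+W_\0)^*(W_m+W_\0)\bigr)$, you via conjugate-symmetry of the form $(\aa,\mathfrak{b})\mapsto\omega(\aa^*\mathfrak{b})$), and then conclude using invariance under $-\Id\in\Sp(2,\Z)$ and the action \eqref{eq:SPaction}. If anything, your justification of the Hermiticity step is slightly more complete than the paper's one-line computation.
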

\begin{proof}
Since $\omega$ is a linear positive functional, then, for every $m\in\Z^2$, it holds
\begin{align*}
\omega\left( (W_m + W_{(0,0)})^*(W_m + W_{(0,0)})\right) =   2  +\omega\left(W_{m}^*\right) + \omega\left(W_m\right)  \in [0,\infty)  \,.
\end{align*}
This implies in particular that $\ol{\omega(W_m)} = \omega(W_m^*)$.\\ Now,
let $\Id$ be the $2$ by $2$ identity matrix and notice that $-\Id \in \Sp(2,\Z)$. It follows that for every $m\in \Z^2$ we have
	\[
	\ol{\omega(W_m)} = \omega(W_m^*) =\omega(W_{-m})=\omega(W_{-\Id m}) = \omega(W_m)   
	\]
	where in the fourth equality we used the invariance of the state under the action of the symplectic group.
	\end{proof}

Now let $m,n \in \N$ be such that $m$ is divisible by $n \ge 1$, and consider the subset $\mathcal{G}_{m,n}$ of $\Sp(2,\Z)$  containing the elements of the form
\begin{equation}\label{Theta form}
\Theta_j  := \begin{pmatrix}
1& \frac{m}{n} j \\ 0 & 1 
\end{pmatrix}\cdot\begin{pmatrix}
1 & 0\\ n-1 & 1 
\end{pmatrix} = \begin{pmatrix}
1 + \frac{m}{n}(n-1)j & \frac{m}{n} j \\ n-1 & 1 
\end{pmatrix}, \qquad  j\in \mathbb{Z}\,.
\end{equation}	
 
Let $\xi:=(\xi_1,\xi_2)$ be an element of $\Z^2$ with $\xi_1 = \xi_2 > 0$ (thanks to Corollary \ref{cor:orbit} this choice is not restrictive) and
consider the set 
$$\mathcal{O}_{\xi;\, m, n} =\{ z\in\Z^2 \,|\, z=\Theta_j \xi \,, \text{ with } \Theta_j \in \mathcal{G}_{m,n} \}\,.$$
Notice that for all $n, m$ as above the elements of $\mathcal{O}_{\xi;\, m, n} $ belongs to the same $\Sp(2, \Z)$-orbit of $\xi$ and take the form
$$ \Theta_j \xi := \begin{pmatrix}
1 + \frac{m}{n}(n-1)j & \frac{m}{n} j \\ n-1 & 1 
\end{pmatrix} \begin{pmatrix}
\xi_2 \\ \xi_2 
\end{pmatrix} = \begin{pmatrix}
 m j \xi_2 + \xi_2 \\  n \xi_2 
\end{pmatrix}. $$
Let $\mathcal{V}_{\xi;\, m, n} \subset \cA $ be the vector space formed by the linear combinations of the identity in $\cA $ and of the Weyl generators indexed by the elements of $\mathcal{O}_{\xi;\, m,n}$. In other words, a generic element of $\mathcal{V}_{\xi;\, m, n}$ can be written as a finite sum of the form
\begin{equation*} 
\aa = \alpha_0 W_\0+ \sum_{j\geq 1} \alpha_j W_{\Theta_j\xi} 
\end{equation*}
where $\alpha_i\in\mathbb{C}$, $\Theta_j \in \cG_{m,n}$. Since we are interested in $\Sp(2,\Z)$-invariant states $\omega$, we have
$$\omega(\aa) = \alpha_0+\sum_{j\geq 1} \alpha_j\, p \, ,$$
where we denoted $p = \omega(W_{\Theta_j \xi})$, which does not depend on $j, m, n$ in view of the state invariance.
Notice that, for every finite dimensional subspace of $\cV_{\xi;\, m, n}$, the map $\aa \mapsto \omega(\aa^*\aa)$ is a quadratic form; therefore, it can be written as
\begin{equation}\label{eq:Homega}
\omega(\aa^*\aa) = \overline{\a}^t \, \bH \, \a
\end{equation}
for an Hermitian matrix and $\a$ a vector with components $\a_j\in\C$. On the $(d+1)$-dimensional subspace spanned by the elements $\{ W_{(0, 0)}, W_{\Theta_{j} \xi} \}_{1 \le j \le d}$ the entries of the matrix $\bH$ can be described as 
\begin{align} \label{eq:H-entries1}
(\bH)_{0,0}&=(\bH)_{j,j} = 1 , \qquad &  d\geq j \geq 1\\
\label{eq:H-entries2}
(\bH)_{0,j} &= (\bH)_{j,0} = p , \qquad &  d \ge j\geq 1
\\
(\bH)_{j,i} &=  q_{{(i-j)m}} e^{\i (i-j) \phi_{m,n}},   \qquad & d \ge i > j  \geq 1  
\label{eq:H-entries3}
\end{align}
where \eqref{eq:H-entries1} holds because of the state normalization condition,
\begin{equation}\label{eq:H-component}
 q_{(i-j)m}  := \omega(W_{ \Theta_{i} \xi- \Theta_{j} \xi}),
\qquad
h \sigma(\Theta_{i} \xi , \Theta_{j} \xi)= (i-j)\phi_{m,n} \,
\end{equation}
and
\begin{equation}\label{eq:phase}
\phi_{m,n} := h m n \xi_2^2. 
\end{equation}
Notice that $p,$ $q_{(i-j)m}$ and $\phi_{(i-j)m,n}$ are real numbers; in particular $q_{(i-j)m}$ is a real number thanks to Proposition~\ref{prop:real}.

\begin{notation}\label{notation}
We remark that, on account of equations \eqref{eq:H-entries1}, \eqref{eq:H-entries2} and \eqref{eq:H-entries3}, the Hermitian matrix 
\[
\bH= \begin{pmatrix}
	1 & p & p & p & p & p  & \ldots \\
 	p & 1 & q_m e^{\i \phi_{m,n}} &   q_{2m} e^{2\i \phi_{m,n}}  &  q_{3m} e^{3\i \phi_{m,n}} &  q_{4m} e^{4\i \phi_{m,n}}    & \ldots  \\
	p & q_{m} e^{-\i \phi_{m,n}}  & 1& q_{m} e^{\i \phi_{m,n}}  &    q_{2m} e^{2\i \phi_{m,n}}  &  q_{3m} e^{3\i \phi_{m,n}}   & \ldots \\
	p & q_{2m} e^{-2\i \phi_{m,n}}  & q_{m} e^{-\i \phi_{m,n}}  & 1& q_{m} e^{\i \phi_{m,n}}  &    q_{2m} e^{2\i \phi_{m,n}}   & \ldots \\
	p & q_{3m} e^{-3\i \phi_{m,n}}  & q_{2m} e^{-2\i \phi_{m,n}} & q_{m} e^{-\i \phi_{m,n}}  & 1& q_{m} e^{\i \phi_{mn}}   & \ldots \\
		p & q_{4m} e^{-4\i \phi_{m,n}}  &  q_{3m} e^{-3\i \phi_{m,n}}  &  q_{2m} e^{-2\i \phi_{m,n}}  & q_{m} e^{-\i \phi_{m,n}} & 1 & \ldots \\
	\vdots & \vdots &\vdots&\vdots&\vdots&\vdots& \ddots \\
	\end{pmatrix}
\]
is completely determined once the second row is known. Therefore, in order to keep simple our notation, we denote $\bH$  simply as
$$\bH=[ \, p \; 1 \; q_m e^{\i \phi_{m,n}} \; q_{2m} e^{\i 2 \phi_{m,n}} \; q_{3m} e^{\i 3 \phi_{m,n}} \;\dots]\,.$$
\end{notation}

\begin{rmk} \label{rmk:consistency}
Notice that the notation of the matrices $\bH$ is consistent with respect to any choice of $m, n$ as done previously. Indeed, in view of its definition \eqref{eq:H-component},
$q_{(i-j)m}$
is a function of
\[
\Theta_{i} \xi- \Theta_{j} \xi =
\begin{pmatrix}
(1 + i m)\xi_2 \\ n \xi_2
\end{pmatrix}
- 
\begin{pmatrix}
(1 + j m)\xi_2 \\ n \xi_2
\end{pmatrix}
=
\begin{pmatrix}
(i - j)m \xi_2 \\ 0
\end{pmatrix}, 
\]
which does not depend on $n$ but depends only on $(i - j)m$, which is the subscript of $q$. We will constantly exploit this fact in order to compare the entries of matrices obtained by restricting $\omega$ to different finite dimensional subspaces of $\cV_{\xi; \, m,n}$. Hence, once the notation defined so far is used, in order to check that two entries in two different matrices $q_r$ and $q_t$ agree, it will suffice to check that $r = t$.
\end{rmk}

Building on Remark \ref{rmk:consistency}, we notice that, in the entries of the matrices defined so far, the $q_{(i-j)m}$ do not depend on $n$, whereas the arguments $\phi_{m,n}$ do. This fact will play a crucial role in the analysis that will follow.  

\begin{rmk}\label{rmk:p<=1}
We notice immediately that the positivity of the state $\omega$ gives a bound on $p$ in  $\bH$. Actually, since the determinant of the upper left $2\times 2$ sub matrix of $\bH$ is positive, we have that 
\[
1-p^2\leq0,
\qquad
\Longrightarrow
\qquad
|p|\leq 1.
\]   
\end{rmk}

We begin by discussing the form of the families of matrices of restrictions of $\omega$ that we seeking. 
Let $d$ be a fixed natural number, and consider the $(d+1)\times (d+1)$ matrix 
$\bH''_n$
which is obtained restricting $\bH$ on a $d+1$ dimensional subspace of $\cV_{\xi;\,m,n}$. The next proposition shows that it is always possible to choose $m$ in such a way that $\bH_n''$ can be well approximated by the $(d+1)\times (d+1)$ matrix of the form
\begin{equation}\label{bH}
\bH'_{n} := 
[\,p \; 1 \; q_{m} e^{	 \i\frac{2 \pi}{d}n }\; q_{2m} e^{2\i  \frac{ 2\pi}{ d}n }\; q_{3m} e^{ 3\i \frac{2\pi}{d}n }\; \ldots 
\; q_{(d-1)m} e^{(d-1)\i  \frac{ 2\pi}{ d }n} ].
\end{equation} 
At this point, it is interesting to notice that in the context of noncommutative geometry the analysis of  approximations of algebras or states can shed light on some interesting hidden structures, see e.g. \cite{BKR,LLS}.

\begin{prop}\label{prop:ergodic}
Let $\xi\in\mathbb{Z}^2$; then
for any $d\in \mathbb{N}$ which is non zero, for every $l\in[1,d]$ and for every $\epsilon>0$, there exists an $\mathtt{N}\in\mathbb{N}$ which is a multiple of $d!$ and a $(d +1)$-dimensional subspace $\cV_{d;l} \subset \cV_{\xi;\, \mathtt{N}, l}$, such that the restriction of $\bH$ to $\cV_{d; l}$ is given by a $(d+1) \times (d+1)$-matrix of the form
\begin{equation}\label{bH_1}
\bH''_{l} = \bH'_{l} + \mathbf{1}_\epsilon + \mathbf{i}_\epsilon \,.  
\end{equation} 
Here $\bH'_{l}$ is given in \eqref{bH} with $m=\mathtt{N}$ and $n=l$
while  $\mathbf{1}_\epsilon$ and $\mathbf{i}_\epsilon$ are the Hermitian matrices with components
	 \begin{align*}  
&(\mathbf{1}_\epsilon)_{j,0} = (\mathbf{1}_\epsilon)_{j,j} = (\mathbf{1}_\epsilon)_{0,j}  = 0  &\text{ and }  \qquad &(\mathbf{i}_\epsilon)_{j,0} = (\mathbf{i}_\epsilon)_{j,j}= (\mathbf{i}_\epsilon)_{0,j} = 0  & \text{ for }   d\geq j\geq 0\\
&(\mathbf{1}_\epsilon)_{j,i} = \epsilon_{j,i}  & \text{ and }  \qquad & (\mathbf{i}_\epsilon)_{j,i} = \i\,\epsilon_{j,i}'  & \text{ otherwise }
\end{align*} 
for some $\epsilon_{j,i}, \epsilon_{j,i}' \in \R$ with $|\epsilon_{j,i}|, |\epsilon_{j,i}'| < \epsilon$.
\end{prop}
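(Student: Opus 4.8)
The heart of the matter is the single off-diagonal entry $q_{rm}\,e^{\i r\phi_{m,n}}$, and the plan is to hold the modulus factors $q_{rm}$ fixed while steering the phase factors $e^{\i r\phi_{m,n}}$ arbitrarily close to the target values $e^{\i r\,(2\pi/d)\,n}$ appearing in $\bH'_{l}$. The key structural observation, recorded in Remark~\ref{rmk:consistency}, is that $q_{rm}$ depends only on the product $rm$ and is independent of $n$, whereas by \eqref{eq:phase} the phase is $\phi_{m,n}=h\,m\,n\,\xi_2^2$. Thus if I pick $m=\mathtt{N}$ and $n=l$, the relevant phase increment is $\phi_{\mathtt{N},l}=h\,\mathtt{N}\,l\,\xi_2^2$, and I need to choose $\mathtt{N}$ so that
\[
\phi_{\mathtt{N},l}\equiv \frac{2\pi}{d}\,l \pmod{2\pi}
\]
up to an error smaller than $\epsilon$ in each of the $d$ relevant phases $e^{\i r\phi_{\mathtt{N},l}}$, $1\le r\le d-1$.

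First I would fix $d$, $l\in[1,d]$ and $\epsilon>0$, and reduce the problem to a single equidistribution statement. Writing $\hbar=h/2\pi$, the phase condition becomes a condition on the fractional part of $\hbar\,\mathtt{N}\,l\,\xi_2^2$: I want $\hbar\,\mathtt{N}\,l\,\xi_2^2$ to lie within $\epsilon/(2\pi d)$ of $l/(2\pi d)\cdot 2\pi = l/d$ modulo $1$, so that each $r\phi_{\mathtt{N},l}$ is within $\epsilon$ of $r\,(2\pi/d)\,l$ modulo $2\pi$ for all $r\le d-1$ simultaneously. Since $\hbar$ is irrational, the classical Weyl equidistribution (equivalently, the three-distance/Kronecker theorem) guarantees that the sequence $\{\hbar\,k\,l\,\xi_2^2\}_{k}$ is dense in $[0,1)$ as $k$ ranges over the integers; the only subtlety is that I must take $\mathtt{N}$ to be a multiple of $d!$. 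I would handle this by applying equidistribution to the subsequence indexed by multiples of $d!$: the number $\hbar\,d!\,l\,\xi_2^2$ is again irrational (a nonzero rational multiple of $\hbar$), so $\{\hbar\,d!\,k\,l\,\xi_2^2\}_k$ is dense in $[0,1)$, and I may pick $k$ so that $\mathtt{N}=d!\,k$ realizes the desired fractional part to within the prescribed tolerance.

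Once $\mathtt{N}$ is fixed, I would define $\cV_{d;l}\subset\cV_{\xi;\,\mathtt{N},l}$ to be the $(d+1)$-dimensional subspace spanned by $W_{\0}$ together with $W_{\Theta_j\xi}$ for $1\le j\le d$, with $\Theta_j\in\cG_{\mathtt{N},l}$, and compute the restriction of $\bH$ to it directly from \eqref{eq:H-entries1}--\eqref{eq:H-entries3}. The diagonal and the first row/column are exactly $1$ and $p$, matching $\bH'_{l}$ with no error, which is why both $\mathbf{1}_\epsilon$ and $\mathbf{i}_\epsilon$ vanish on those entries. For a genuine off-diagonal entry, the discrepancy between $q_{rm}e^{\i r\phi_{\mathtt{N},l}}$ and $q_{rm}e^{\i r(2\pi/d)l}$ is $q_{rm}\bigl(e^{\i r\phi_{\mathtt{N},l}}-e^{\i r(2\pi/d)l}\bigr)$; using $|q_{rm}|\le1$ (Remark~\ref{rmk:p<=1}) and the Lipschitz bound $|e^{\i\theta}-e^{\i\theta'}|\le|\theta-\theta'|$, this has modulus at most $r|\phi_{\mathtt{N},l}-(2\pi/d)l|$, which I have arranged to be below $\epsilon$. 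Splitting this complex error into its real and imaginary parts yields the required $\epsilon_{j,i}$ and $\epsilon'_{j,i}$, and Hermiticity of $\mathbf{1}_\epsilon$ and $\mathbf{i}_\epsilon$ follows from that of $\bH$ and $\bH'_{l}$.

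The main obstacle I anticipate is purely bookkeeping rather than conceptual: I must verify that a \emph{single} choice of $\mathtt{N}$ simultaneously controls all $d-1$ distinct phases $e^{\i r\phi_{\mathtt{N},l}}$ to within $\epsilon$, not just one of them. This is why the estimate is set up so that the tolerance on the fractional part of $\hbar\,\mathtt{N}\,l\,\xi_2^2$ is taken proportional to $1/d$: controlling the base phase to within $\epsilon/d$ automatically controls its $r$-th multiple to within $r\epsilon/d\le\epsilon$ for every $r\le d-1$. The constraint that $\mathtt{N}$ be divisible by $d!$ is imposed for later use (it makes the various subspaces compatible across different values of $l$ when they are recombined into a convex combination), and I would simply note that irrationality of $\hbar$ is preserved under multiplication by the fixed nonzero integer $d!\,l\,\xi_2^2$, so no density is lost.
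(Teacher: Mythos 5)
Your construction rests on the same two ingredients as the paper's proof: density of an irrational rotation on the circle to steer the phases (in the paper, condition \eqref{N grande}), and the bound $|q_{rm}|\le 1$ from Remark \ref{rmk:p<=1} combined with a Lipschitz estimate on $e^{\i\theta}$ to turn the phase error into the entrywise errors $\epsilon_{j,i},\epsilon'_{j,i}$. Your treatment of the $d!$-divisibility --- running equidistribution along the subsequence of multiples of $d!$, noting that $\hbar\, d!\, l\, \xi_2^2$ is still irrational --- is in fact cleaner than the paper's remark that one can arrange $\mathtt{N}\pm d!\equiv\mathtt{N}$.

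There is, however, one substantive deviation, and it is not harmless: you impose the approximation on $\hbar\,\mathtt{N}\,l\,\xi_2^2$ (targeting $l/d$ modulo $1$), so your $\mathtt{N}$ depends on $l$. The paper instead imposes the condition on $h\,\mathtt{N}\,\xi_2^2$ alone, targeting $2\pi/d$ with tolerance $\epsilon/(4d^2)$; multiplying this single congruence by the integer $l$ then gives, for one and the same $\mathtt{N}$, phases close to $2\pi l/d$ for every $l\in[1,d]$ simultaneously. Your reading does satisfy the proposition as literally quantified (``for every $l$ \dots there exists $\mathtt{N}$''), but the uniform version is what the rest of the paper consumes: in Proposition \ref{bR} the matrices $\bH'_l$, $l=1,\dots,d$, are averaged, and the cancellation $R_{j,d}=q_{j\mathtt{N}}\sum_{l=1}^{d}e^{\i\frac{2\pi}{d}lj}=0$ requires the coefficient $q_{j\mathtt{N}}$ to be common to all $d$ matrices, i.e.\ the same $\mathtt{N}$ for every $l$; with an $l$-dependent $\mathtt{N}(l)$ the entries $q_{j\mathtt{N}(l)}$ need not coincide (Remark \ref{rmk:consistency}), the geometric sum cannot be factored out, and $\bR_d$ no longer reduces to $\bP_d$. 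This is also the real reason the statement insists on divisibility by $d!$: a single $\mathtt{N}$ must be divisible by every $n=l\le d$ so that all the sets $\cG_{\mathtt{N},l}$ are defined, whereas if $\mathtt{N}$ were allowed to vary with $l$, divisibility by $l$ alone would suffice and the $d!$ requirement would be pointless. The repair inside your argument is one line: apply your equidistribution step to the base quantity $\hbar\, d!\, k\, \xi_2^2$, targeting $1/d$ with tolerance of order $\epsilon/d^2$ (so as to absorb both the factor $r<d$ and the factor $l\le d$), set $\mathtt{N}=d!\,k$, and only afterwards multiply by $l$; the rest of your proof then goes through verbatim.
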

\begin{proof}
Since $\hbar:= \frac{h}{2\pi}$ is irrational, it follows that there exists an $\mathtt{N} \in \N$ such that
\begin{equation}\label{N grande}
 \left|(h\, \mathtt{N}\, \xi_2^2 ) \mod(2 \pi) - \frac{2\pi}{d}\right| < \frac{\epsilon}{4 d^2} \,. 
\end{equation}
By the ergodicity of the multiplication of $S^1$, we can find such an $\mathtt{N}$ satisfying $\mathtt{N}\pm d! \equiv \mathtt{N}$, because $\mathtt{N}$ can be chosen to be arbitrarily large. Therefore, we can assume $\mathtt{N}$ to be divisible by $d!$.\\
Now consider the subset $\cG_{\mathtt{N},l}$ of the $\Sp(2,\Z)$ containing the elements of the form given in \eqref{Theta form}, which we recall here
$$ 
\Theta_j  = \begin{pmatrix}
 1+ \frac{m (n-1)}{n} j & \frac{m}{n} j \\ n-1 & 1 
\end{pmatrix} \in \cG_{m,n}, \qquad \text{ with } \quad m:= \mathtt{N} \,,  \quad n:= l \,,\quad j\in \mathbb{Z}\,.$$
Consider now the set of the generators 
\[ S= \{ W_{(0,0)} \} \cup \left\lbrace  W_{ \left( \mathtt{N} j  \xi_2 + \xi_2, l \xi_2 \right)} \right\rbrace_{1 \le j \le d} \]
and denote their linear span as 
$$
\cV_{d; l} = \left \{\alpha_0 W_\0 + \sum_{j=1}^{d} \alpha_j W_{\Theta_{j }\xi} \,|\, \alpha_j \in \C\,, W_{\Theta_{j } \xi } \in S \right \} \,.
$$ 
Then the restriction of $\omega$ to $\cV_{d; l}$ is described as in \eqref{eq:Homega} by a Hermitian matrix of the form
\begin{align*} \label{eq:H-entries}
(\bH'')_{0,0} &=(\bH'')_{j,j}= 1 , \qquad & d\geq j\geq 1
\notag
\\
(\bH'')_{0,j} &= p , \qquad & d\geq j\geq 1
\notag
\\
(\bH'')_{j,i} &=  q_{(i-j)m} e^{\i (i-j) \phi_{m,n}},   \qquad &d\geq i > j  \geq 1,
\end{align*}
where, using \eqref{eq:H-component} and \eqref{eq:phase}, we get
\begin{gather*}
 q_{(i-j)m} = \omega(W_{ \Theta_{i } \xi- \Theta_{j} \xi}) = \omega(W_{((i - j)\mathtt{N} \xi_2 , 0 )})
\\
(i-j) \phi_{m,n} = h \sigma(\Theta_{j} \xi , \Theta_{i} \xi) = (i - j) l h \mathtt{N}\xi_2^2  = (i - j) l \frac{2\pi}{d}   + (i - j) l\frac{\epsilon}{4d^2} 
\,.
\end{gather*}
Notice that, being $q_m\leq 1$ by Remark \ref{rmk:p<=1}, we have that for $d\geq i > j  \geq 1$
\[
\left|(\bH'')_{j,i} -(\bH')_{j,i} \right|=\left|
q_{(i-j)m} \left( e^{\i (i-j) \phi_{m,n}} - e^{	 \i(i - j)\frac{2 \pi}{d}l }\right)
\right|\leq 
\left|
 e^{\i (i-j) l\epsilon }-1\right| < 2 |i-j| l\frac{\epsilon}{4d^2} < \epsilon.
\]
It is now straightforward to check that the matrix so defined satisfies the properties claimed in the proposition.
\end{proof}

Since $\epsilon$ can be chosen arbitrarily small, its contribution will not play any role in the following computations, and, therefore, it is neglected. Hence, it will be important to discuss in detail the properties of the matrices $\bH'_{l}$ only. 
Therefore, all the equalities that will be discussed from here on hold also substituting $\bH'_{l}$ with  $\bH''_l$ up to $\epsilon$ for any $\epsilon > 0$, and then their limit for $\epsilon \to 0$ gives the desired result. 
We shall see in the proof of Theorem \ref{thm:main} that 
this is in fact the case and, in particular, that the error can be easily estimated in terms of $\epsilon$.
The limit $\epsilon\to0$ poses then no problem because only finitely many equations are considered at each time. 

\begin{lemma}\label{lem:convex_cone}
The set of positive Hermitian matrices form a convex cone, \ie convex combinations of positive Hermitian matrices are positive. 
\end{lemma}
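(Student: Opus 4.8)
The plan is to unwind the definition of positivity directly, since the statement reduces to the elementary fact that the map sending a Hermitian matrix to its associated quadratic form is linear and order-preserving. Recall that a Hermitian matrix $\bH$ is positive precisely when $\ol{\a}^t \bH \a \ge 0$ for every complex vector $\a$, exactly as in the expression \eqref{eq:Homega} for $\omega(\aa^*\aa)$.

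First I would check that the class of positive Hermitian matrices is closed under the two operations defining a convex cone, namely nonnegative scaling and addition. If $\bH$ is positive and $\lambda \ge 0$, then $\lambda \bH$ is again Hermitian, being a real multiple of a Hermitian matrix, and $\ol{\a}^t (\lambda \bH) \a = \lambda\, \ol{\a}^t \bH \a \ge 0$, so $\lambda \bH$ is positive. If $\bH_1$ and $\bH_2$ are positive, then $\bH_1 + \bH_2$ is Hermitian and $\ol{\a}^t (\bH_1 + \bH_2) \a = \ol{\a}^t \bH_1 \a + \ol{\a}^t \bH_2 \a \ge 0$, so it too is positive. Together these show that the set of positive Hermitian matrices is a convex cone.

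In particular, for any finite family $\bH_1,\dots,\bH_k$ of positive Hermitian matrices and any nonnegative coefficients $\lambda_1,\dots,\lambda_k$ (for a convex combination one additionally imposes $\sum_i \lambda_i = 1$), the combination $\bH = \sum_{i=1}^k \lambda_i \bH_i$ is Hermitian, since $\bH^* = \sum_{i=1}^k \lambda_i \bH_i^* = \sum_{i=1}^k \lambda_i \bH_i = \bH$ using that each $\lambda_i$ is real, and it satisfies $\ol{\a}^t \bH \a = \sum_{i=1}^k \lambda_i\, \ol{\a}^t \bH_i \a \ge 0$ for every $\a$, being a nonnegative combination of nonnegative terms. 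Hence $\bH$ is positive, which is the assertion.

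I do not expect any genuine obstacle here: the whole content is the linearity of $\a \mapsto \ol{\a}^t \bH \a$ in the matrix argument $\bH$ together with the fact that each summand is already nonnegative. The only point requiring a word of care is the Hermiticity of the combination, which holds exactly because the coefficients are real; this is why the statement is phrased for real, nonnegative coefficients rather than for arbitrary complex ones.
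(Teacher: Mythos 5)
Your proof is correct and follows essentially the same route as the paper: both arguments reduce positivity to the quadratic form $\a \mapsto \ol{\a}^t \bH \a$ and use its linearity in $\bH$ to conclude that a nonnegative combination of nonnegative terms is nonnegative. Your version is slightly more careful than the paper's (you verify Hermiticity of the combination and treat semidefiniteness with $\ge 0$, whereas the paper works with strict inequalities), but the underlying idea is identical.
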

\begin{proof}
Let $\mathscr{I}$ be a finite set and consider a convex combination $\bA$ of positive definite $d\times d$-matrices $\{A_i\}_{i \in \sI}$, namely 
$$\bA=\sum_{i\in \mathscr{I}} \lambda_i A_i \qquad \text{ with } \quad \sum_{i\in \sI} \lambda_i = 1 \text{ and } \lambda_i >0 \,\, \forall i \in \mathscr{I} \,.$$
Then, for any vector $v\in \C^d$, we have
$$ v^\dagger \bA v =\sum_{i\in \mathscr{I}} \lambda_i \,v^\dagger   A_i v > 0 \,. \qedhere$$
\end{proof}

On account of Lemma \ref{lem:convex_cone}, if we can find a convex combination of $(d+1)\times(d+1)$-matrices given by restrictions of $\omega$ to $\mathcal{V}_{d;\, l}$ which is non-positive, then we can deduce that at least one of these matrices is not positive (proving that $\omega$ is not positive neither). The next series of lemmas is necessary to produce such a convex combination and to prove some of its properties. Let us call to mind that we keep the notation introduced in Proposition \ref{prop:ergodic}.

\begin{prop}\label{bR}
Let $d \in\N$ be such that $d > 0$ and consider the matrices $\bH'_{l}$ defined in \eqref{bH} and obtained in Proposition \ref{prop:ergodic} with $l\leq d$. Moreover, let us consider
\[ 
\bR_{d} =  \sum_{l = 1}^{d} \frac{1}{d} \bH'_{l} \,. 
\]
Then the components $R_{j,d}$ of the matrices 
$$ 
\bR_{d} = [\,p\;1\;R_{1, d}\;R_{2, d}\;R_{3, d}\;R_{4, d}\;\dots\; R_{d-1,d}\,] 
$$
are such that $R_{j, d} = 0$. 
\end{prop}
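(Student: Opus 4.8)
The plan is to read off, slot by slot, the entries of the idealized matrices $\bH'_l$ from the compact notation of \eqref{bH}, to sum them over $l$ with equal weights $1/d$, and to reduce the claim to the vanishing of a geometric sum of roots of unity.

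First I would observe that the slot occupied by $R_{j,d}$ in the compact description of $\bR_{d}$ is occupied, in each $\bH'_l$, by the number $q_{j\mathtt{N}}\,e^{\i j\frac{2\pi}{d}l}$, where I have used $m=\mathtt{N}$ and $n=l$ as fixed in Proposition \ref{prop:ergodic}. The crucial point, which I would justify via Remark \ref{rmk:consistency}, is that the amplitude $q_{j\mathtt{N}}$ is the same for every $l\in[1,d]$: a single $\mathtt{N}$ (divisible by $d!$ and satisfying the single condition \eqref{N grande}, which does not involve $l$) serves all the matrices $\bH'_1,\dots,\bH'_d$ simultaneously, and the subscript $j\mathtt{N}$ of $q$ depends only on $j$, not on $l$. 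Therefore the amplitude factors out of the sum and
\[
R_{j,d}=\frac{1}{d}\sum_{l=1}^{d} q_{j\mathtt{N}}\,e^{\i j\frac{2\pi}{d}l}=\frac{q_{j\mathtt{N}}}{d}\sum_{l=1}^{d}\Big(e^{\i j\frac{2\pi}{d}}\Big)^{l}.
\]

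The key step is then to evaluate the inner sum for $1\leq j\leq d-1$. In this range $\zeta:=e^{\i j\frac{2\pi}{d}}$ is a nontrivial $d$-th root of unity, so $\zeta\neq1$ while $\zeta^{d}=1$, and the geometric series collapses to $\sum_{l=1}^{d}\zeta^{l}=\zeta\,\frac{\zeta^{d}-1}{\zeta-1}=0$. Hence $R_{j,d}=0$ for every $j$ in the stated range, as claimed.

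I do not anticipate a genuine obstacle: the vanishing is the standard discrete-Fourier (character-sum) cancellation over a full period. The only care needed is bookkeeping — matching the index $j$ of $R_{j,d}$ to the correct slot of $\bH'_l$, and justifying that the $q_{j\mathtt{N}}$ can be extracted from the sum uniformly in $l$, which is exactly the content of Remark \ref{rmk:consistency} combined with the $l$-independence of \eqref{N grande}. Since Proposition \ref{bR} is stated for the idealized matrices $\bH'_l$ rather than the true restrictions $\bH''_l$, the $\epsilon$-corrections of \eqref{bH_1} do not enter the computation; as the surrounding text already explains, they may be reinstated and sent to zero afterwards without disturbing the vanishing established here.
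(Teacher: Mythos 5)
Your proof is correct and follows essentially the same route as the paper's: read off the entry $q_{j\mathtt{N}}e^{\i j\frac{2\pi}{d}l}$ from \eqref{bH}, factor out the $l$-independent amplitude, and observe that $\sum_{l=1}^{d}\zeta^{l}=0$ for the nontrivial $d$-th root of unity $\zeta=e^{\i j\frac{2\pi}{d}}$. Your explicit justification that a single $\mathtt{N}$ (via the $l$-independence of \eqref{N grande}) serves all the $\bH'_l$ simultaneously is a worthwhile clarification that the paper leaves implicit, and you also correctly retain the $1/d$ normalization that the paper's displayed computation drops (harmlessly, since the sum vanishes).
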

\begin{proof}
Notice that, on account of the form of $\bH_l'$ given in $\eqref{bH}$, it holds
\[
R_{j,d} = q_{j \mathtt{N}}   \sum_{l=1}^{d}  e^{ \i \frac{2\pi}{d} l j}
=q_{j \mathtt{N}}\sum_{l=1}^d r^l = q_{j \mathtt{N}} \frac{r}{1-r}(1-r^{d})\,,
\]
where in the third equality we computed the sum of $d$ elements of a geometric series of ratio $r=e^{\i\frac{2\pi}{d} j}$. Furthermore, the last equality holds because  $1\leq j<d$ and thus $r\neq1$. Finally, we notice that $r^{d}=1$ and accordingly $R_{j,d}$ vanishes.   
\end{proof}
Before giving the proof of Theorem \ref{thm:main}, we need another lemma involving the $(d+1)\times (d+1)$ matrix $\bP_d$ defined as 
\begin{equation}\label{eq:Pd}
\bP_d := [p;1;0;0;\dots ;0],
\end{equation}
which is nothing but $\bR_{d}$ given in Proposition \ref{bR}. 

\begin{lemma}\label{lem:Pn_negative}
Let $d\in\N$ and consider the $(d+1)\times (d+1)$ matrix $\bP_d$ defined in \eqref{eq:Pd}. Then, it holds 
\[
\det(\bP_d)= 1 - d p^2 \,. \qedhere
\]
\end{lemma}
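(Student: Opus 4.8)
The plan is to first unpack the one-line notation of Notation \ref{notation} into an explicit matrix. Since $\bP_d = [p;1;0;\dots;0]$ encodes a second row equal to $(p,1,0,\dots,0)$, and all the $q_{j\mathtt{N}}$ with $j\geq 1$ vanish, reconstructing the full Hermitian matrix produces a $(d+1)\times(d+1)$ matrix whose lower-right $d\times d$ block is the identity, whose $(0,0)$ entry is $1$, and whose first row and first column (off the diagonal) consist entirely of the entry $p$. In block form,
\[
\bP_d = \begin{pmatrix} 1 & p\,u^{t} \\ p\,u & I_d \end{pmatrix},
\]
where $u=(1,\dots,1)^{t}\in\R^{d}$ is the all-ones vector and $I_d$ the $d\times d$ identity.

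With this explicit form the determinant follows in either of two equivalent ways. The most elementary is a single row operation: subtract $p$ times each of rows $1,\dots,d$ from row $0$. Each row $j$ (with $j\geq 1$) carries $p$ in column $0$ and $1$ in column $j$, so this operation turns the $(0,0)$ entry into $1-d\,p^{2}$ and annihilates every off-diagonal $p$ in row $0$, leaving a lower-triangular matrix. Since adding multiples of some rows to another row leaves the determinant unchanged, $\det(\bP_d)$ equals the product of the diagonal entries, namely $(1-d\,p^{2})\cdot 1\cdots 1 = 1 - d\,p^{2}$. Alternatively, one applies the Schur complement formula to the invertible block $I_d$, obtaining
\[
\det(\bP_d) = \det(I_d)\,\bigl(1 - p\,u^{t}\,I_d^{-1}\,p\,u\bigr) = 1 - p^{2}\,u^{t}u = 1 - d\,p^{2},
\]
using $u^{t}u = d$.

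There is essentially no substantive obstacle here: the statement reduces to an elementary determinant computation for a rank-one bordered identity matrix. The only point demanding care is the bookkeeping involved in translating the compressed notation $[p;1;0;\dots;0]$ into the genuine bordered matrix and in tracking the index range $0,\dots,d$, so that the coefficient $d$ (the number of $p$-entries in the border) appears rather than an off-by-one value such as $d\pm 1$. Once the matrix is written out in block form, both routes deliver $\det(\bP_d)=1-d\,p^{2}$ at once.
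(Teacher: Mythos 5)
Your proof is correct, but it takes a genuinely different route from the paper. The paper proceeds by induction on $d$: the base case $\det(\bP_1)=1-p^2$ is checked directly, and the inductive step computes $\det(\bP_{d+1})$ via Laplace expansion along the last column, which splits into the term $\det(\bP_d)$ plus a second term whose value $(-1)^{d+1}p^2(-1)^{d}=-p^2$ requires a small sign computation (which the paper leaves rather terse, as the reader must identify the relevant minor and see that its determinant is $(-1)^d p$). You instead unpack the compressed notation into the explicit bordered-identity block form $\bigl(\begin{smallmatrix} 1 & p\,u^{t} \\ p\,u & I_d\end{smallmatrix}\bigr)$ and compute the determinant in one shot, either by a single row operation that triangularizes the matrix or by the Schur complement of the block $I_d$. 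Your approach avoids induction and the Laplace sign bookkeeping entirely, and it makes transparent \emph{why} the answer is $1-dp^2$: the border is a rank-one perturbation contributing $p^2$ once per each of the $d$ bordered columns. The paper's induction is equally elementary but arguably more opaque; on the other hand it needs no block-matrix machinery beyond the cofactor expansion. Both arguments are complete and correct, and your care with the index range (so that the coefficient is $d$ and not $d\pm 1$) addresses precisely the point where such a computation could go wrong.
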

\begin{proof}
We prove it by induction. By direct computations, we have that $\bP_1 = 1 - p^2$. Suppose that $\det{\bP_d}= 1 - d p^2$, we can compute 
$\det{\bP_{d+1}}$ by the Laplace formula expanding with respect to the last column. The resulting formula has two terms, the second of which can be computed directly and the first one is the determinant of ${\bP_d}$, hence
\[
\det{\bP_{d+1}}= \det{\bP_d} + (-1)^{d+1}  p^2 (-1)^{d} = 1 - {(d+1)} p^2 \,. \qedhere
\]
\end{proof}
We have now all the ingredients to prove the main Theorem.

\begin{proof}[Proof of Theorem \ref{thm:main}]
One notices immediately that the trace state $\tau$ is positive, normalized and $\Sp(2,\Z)$-invariant. Now we assume that $\omega$ is a positive $\Sp(2,\Z)$-invariant state which is not the trace state. As explained so far, $\omega$ can be uniquely determined by
$$
\omega(W_m)=\begin{cases} 1 & \text{ if } m=\0 \\ p^{(m)} & \text{ else ,} \end{cases}
$$
where $W_m$ is a generator of $\cA$ and hence at least one $p^{(m)}$ is a non-zero real number (\cf Proposition \ref{prop:real}). Consider one such $m \in \Z^2$ for which $p^{(m)} \ne 0$. By Corollary \ref{cor:orbit} we know that $\omega(W_m) = \omega(W_\xi)$ for a $\xi = (j,j)$ given by a unique $j \in \N$. To such a $\xi$ we can always associate the subspaces $\cV_{\xi; \, m,n}$ of $\cA$ and the matrices $\bH_{l}', \bH_{l}''$ obtained by applying Proposition \ref{prop:ergodic}, for which now $p = p^{(m)}$. Then, applying Proposition \ref{bR}, we obtain a convex combination $\bP_d$ of the matrices $\bH_{l}'$ such that 
\[ 
\bP_d = [p;1;0;0; \ldots; 0] 
\]
and, because of Lemma \ref{lem:Pn_negative}, the determinant of $\bP_d$ is $1 - d (p^{(m)})^2$. Since $d$ can be chosen to be an arbitrarily big positive integer for any fixed $p^{(m)} > 0$, we can find a $d$ such that $\det(\bP_d)$ is non-positive, and hence $\bP_d$ is not a positive matrix. Applying Lemma \ref{lem:convex_cone}, we deduce that, for all matrices $\bH_{l}'$ to be positive, it is necessary that $p^{(m)} = 0$, contradicting our hypothesis that $p^{(m)} \ne 0$. In order to deduce the same result for the matrices $\bH_{l}''$, it is enough to apply Proposition \ref{prop:ergodic} 
and notice that, if $\epsilon<1$, we have that  
\[
|\det(\bP_d') | <  |\det(\bP_d)| + \epsilon R(d), 
\]
where 
\[ \bP_d' =  \sum_{l = 1}^{d} \frac{1}{d} \bH''_{l} = \sum_{l = 1}^{d} \frac{1}{d} (\bH'_{l} + \mathbf{1}_\epsilon + \mathbf{i}_\epsilon ), 
\]
is the analog of $\bP_d$ where $\bH_{l}''$ is taken as input in place of $\bH_{l}'$ and $R(d)$ is a positive function of $d$ which bounds the reminder.
A non optimal estimate for $R(d)$ is 
\[
R(d)\leq 2 d(d-1) d!,
\] 
and it follows from the observation that in both 
$\mathbf{1}_\epsilon$ and $\mathbf{i}_\epsilon$ there are at most $d(d-1)$ non vanishing entries. Furthermore, in the Laplace expansion of the determinant of $\bP_d'$, each of these non vanishing entries is multiplied by the determinant of a square submatrix of $\bP_d'$. These submatrices are of at most $d\times d$ dimension. Moreover, in view of Remark \ref{rmk:p<=1}, the absolute value of the entries of $\bP'_d$ are bounded by $1$, and thus the determinant of each submatrix is bounded by $d!$.
Hence, since in Proposition \ref{prop:ergodic} the value of $\epsilon$ can be chosen freely, 
the difference between $|\det(\bP_d') |$ and  $|\det(\bP_d)|$ can be set to be arbitrarily small.
This proves that $p^{(m)} = 0$ is a necessary condition also for $\omega$ to be positive, as a consequence of Lemma \ref{lem:convex_cone}.
Since this is true for all non-zero $m \in \Z^2$, we have proved that $\omega$ is the trace state. 
\end{proof}

\vspace{.5cm}

\end{document}